\newtheorem{theorem}{Theorem}[section]
\newtheorem*{theorem*}{Theorem}
\newtheorem{lemma}{Lemma}[section]
\newtheorem{remark}[theorem]{Remark}
\def\p{\partial}
\def\R{\mathbb{R}}
\def\vp{\varphi}
\def \p {\partial}
\numberwithin{equation}{section}
\begin{document}

\title[Neumann eigenvalues in Gauss space]{An isoperimetric inequality  for lower order Neumann eigenvalues in Gauss space}

\author{Yi Gao} 
\address{School of Mathematical Sciences, Soochow University, Suzhou, 215006, China}
\email{ yigao\underline{~}1@163.com}

\author{Kui Wang} 
\address{School of Mathematical Sciences, Soochow University, Suzhou, 215006, China}
\email{kuiwang@suda.edu.cn}

\subjclass[2010]{35P15, 58G25}

\keywords{Neumann eigenvalues, Isoperimetric inequality, Gauss space}

\begin{abstract}
We prove a sharp isoperimetric inequality for  the harmonic mean of the first $m-1$ nonzero Neumann eigenvalues  for    Lipschitz domains symmetric about the origin in Gauss space. Our result generalizes the Szeg\"o-Weinberger type inequality in Gauss space, which was proven by  Chiacchio and Di Blasio in \cite[Theorem 4.1]{CB12}.

\end{abstract}
\maketitle

\section{Introduction}
The classical Szeg\"o-Weinberger inequality \cite{Sz54, Wei56} states that the ball uniquely maximizes the first nonzero Neumann eigenvalue among bounded domains with the same volume in Euclidean space. This result has been extended to bounded domains in the hemisphere and in hyperbolic space \cite{AB95}. For further results, we refer to \cite{AS96, AN12, As99, BCB16,  LL23, Wang19} and references therein.
Regarding lower order Neumann eigenvalues, Ashbaugh and Benguria \cite{AB93} conjectured that for any bounded Lipschitz domain $\Omega$ in $\R^m$, the following inequality holds:
\begin{align} \label{1.1}
            \frac{1}{\mu_1(\Omega) }+\frac{1}{\mu_2(\Omega) }+\cdots + \frac{1}{\mu_{m}(\Omega) }\geq \frac{m}{\mu_1(B)},
        \end{align}
where $B$ is a round ball of  the same volume as $\Omega$.
It had been noticed by Szeg\"o and Weinberger that Szeg\"o's approach (see \cite{Sz54}) actually yields
an isoperimetric result for the sum of the reciprocals of the first two nonzero Neumann eigenvalues, $\frac 1 {\mu_1(\Omega)}+ \frac 1 {\mu_2(\Omega)}$, where $\Omega\subset\R^2$ is a simply connected domain. 
For planar domains that are not necessarily simply connected, as well as in dimensions  higher than two, the problem remains open.   
 Recently, Xia and  Wang  \cite{XW23} established a sharp isopermetric inequality for the harmonic mean of the first $m-1$ nonzero Neumann eiegnvalues of Laplacian  on any bounded Lipschitz domain in $\R^m$ and $\mathbb{H}^m$, precisely
 \begin{align} \label{1.2}
            \frac{1}{\mu_1(\Omega) }+\frac{1}{\mu_2(\Omega) }+\cdots + \frac{1}{\mu_{m-1}(\Omega) }\geq \frac{m-1}{\mu_1(B)},
        \end{align}
which strongly supports Conjecture \eqref{1.1}.  Wang and Xia \cite{XW23} also proved  \eqref{1.2} in hyperbolic space; Benguria, Brandolini, and Chiacchio  \cite{BBC20} proved \eqref{1.2} in the hemisphere. Additionally, Wang and Xia \cite{WX21} proved a similar inequality to \eqref{1.2} for the ratio of Dirichlet eigenvalues,  and Meng and the second-named author \cite{MW24} derived a similar inequality  in rank-1 symmetric spaces. Chen and Mao \cite{CM24}  studied sharp isoperimetric estimates for the lower-order eigenvalues of the Witten-Laplacian.
For further progress on Ashbaugh and Benguria's conjecture, we refer to  \cite{As99, He06} and the  references therein. 

Eigenvalue optimization problems for  the Laplace operator under the Gaussian measure have also attracted widespread attention. For instance,  Chiacchio and Di Blasio \cite{CB12} proved that, among all origin-symmetric regions with  fixed Gaussain volume, the ball maximizes the first nonzero Neumann eigenvalue. Additional results on eigenvalue problems under Gaussian measure can be found in \cite{AN12, BCHT13, BCKT16, BCB16, CG22} and the references therein.

In this paper, we consider the Neumann eigenvalue problem in Gauss space.  Let $d\gamma_m=(2\pi)^{-m/2}e^{-|x|^2/2}dx$  denote the $m$-dimensional Gaussian measure, and let $\Omega\subset \R^m$ be a connected  Lipschitz domain. We define  $H^1(\Omega, d\gamma_m)$  as the weighted Sobolev space equipped with the norm
\begin{align*}
||u||_{H^1(\Omega, d\gamma_m)}:=\left(\int_\Omega u^2 \, d\gamma_m\right)^{1/2}+ \left(\int_\Omega |\nabla u|^2 \, d\gamma_m\right)^{1/2}. 
\end{align*}
The Neumann eigenvalue problem on  $\Omega$ is given by 
\begin{align}\label{1.3}
    \begin{cases}
      -\Delta u+x\cdot\nabla u
     =\mu  u,& \quad \text{in}\quad \Omega,\\
        \frac{\p u}{\p \nu}=0,& \quad \text{on} \quad \p \Omega,   
    \end{cases}
\end{align}
where $\frac{\p u}{\p \nu}$ denotes the outward normal derivative on $\p \Omega$. It is well known that the problem \eqref{1.3} has discrete eigenvalues, with an abuse of notation still denoted by $\mu_k(\Omega)$ for $k=0,1,\cdots$, which satisfy
  \begin{align*}
       0=\mu_0(\Omega) < \mu_1(\Omega) \leq \mu_2(\Omega) \leq \cdots \to +\infty,
    \end{align*}
with each eigenvalue repeated according to its multiplicity. Here, $\mu_k(\Omega)$ denotes the  $k$th Neumann eigenvalue of \eqref{1.3} in Gauss space, and $u_k(x)$ denotes the corresponding eigenfunction. By standard spectral theory for self-adjoint compact operators,  $\mu_k(\Omega)$ admits the variational characterization: 
\begin{align}\label{1.4}
\mu_k(\Omega)=\inf_{u\in H^1(\Omega, d\gamma_m)}\Big\{\frac{\int_\Omega |\nabla u|^2\, d\gamma_m}{\int_\Omega u^2\, d\gamma_m}: \int_\Omega u u_{j}\, d\gamma_m=0\text{\quad for\quad} 0\le j \le k-1\Big\}.
\end{align}
For Neumann eigenvalues defined in this way, Chiacchio and Di Blasio \cite[Theorem 4.1]{CB12} proved that the Euclidean ball centered at the origin uniquely maximizes $\mu_1(\Omega)$ among all sets $\Omega$ symmetric about the origin with the same Gaussian volume.  In this paper,  we establish a sharp estimate for   the harmonic mean of the first $m-1$ nonzero Neumann eigenvalues in Gauss space. The main result is as follows.
\begin{theorem}\label{thm 1}
Let $\Omega\subset \R^m$ be a  Lipschitz domain (possibly unbounded) symmetric about the origin, and let $B \subset \R^m$  be the  origin-centered ball with the same Gaussian
volume as $\Omega$,   i.e. 
$ \int_{\Omega}\, d\gamma_m= \int_{B}\, d\gamma_m$.
Let $\mu_i(\Omega)$ be the Neumann eigenvalues of \eqref{1.3}.  Then 
\begin{align}\label{1.5}
    \frac{1}{\mu_1(\Omega)}+ \frac{1}{\mu_2(\Omega)}+\cdots+\frac{1}{\mu_{m-1}(\Omega)}\ge \frac{m-1}{\mu_1(B)}.
\end{align}
Equality holds  if and only if 
$\Omega=B$.
\end{theorem}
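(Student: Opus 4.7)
The plan is to extend the harmonic-mean Szeg\"o-Weinberger argument of Wang-Xia \cite{XW23} to the Gauss-space setting, using trial functions of the Chiacchio-Di Blasio type \cite{CB12}. Let $R_0$ be the radius of $B$ and let $g:[0,\infty)\to\R$ be the radial factor of the first non-trivial Neumann eigenfunction on $B$ arising from separation of variables with a spherical harmonic of degree $1$; that is, $g$ solves the ODE $-g''-\frac{m-1}{r}g'+\frac{m-1}{r^{2}}g+rg'=\mu_1(B)g$ on $(0,R_0)$ with $g(0)=0$ and $g'(R_0)=0$, extended by the constant value $g(R_0)$ on $[R_0,\infty)$. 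Define the $m$ trial functions $\phi_i(x)=g(|x|)\,x_i/|x|$ for $i=1,\dots,m$. By the origin-symmetry of $\Omega$, each $\phi_i$ is odd, so $\int_\Omega\phi_i\,d\gamma_m=0$ and hence $\phi_i\perp u_0$ in $L^2(\Omega,d\gamma_m)$.

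The next step is a rotation (QR-decomposition) to impose orthogonality with the lower eigenfunctions. Form the $m\times(m-1)$ matrix $A_{ij}=\int_\Omega\phi_i\,u_j\,d\gamma_m$ and choose $Q\in O(m)$ so that $Q^{T}A$ is upper triangular. A rotation of the coordinate frame by $Q$ maps each $\phi_i$ to a linear combination of the $\phi_j$'s having the same form $g(|x|)\,x'_i/|x|$ in the new coordinates, preserves the zero-mean property, and yields $\int_\Omega\phi_i u_j\,d\gamma_m=0$ for all $1\le j<i\le m$. Therefore $\phi_i\perp u_0,u_1,\dots,u_{i-1}$ and the variational characterization \eqref{1.4} gives
\[
\mu_i(\Omega)\le\rho_i:=\frac{\int_\Omega|\nabla\phi_i|^2\,d\gamma_m}{\int_\Omega\phi_i^2\,d\gamma_m},\qquad i=1,\dots,m-1,
\]
whence $\sum_{i=1}^{m-1}1/\mu_i(\Omega)\ge\sum_{i=1}^{m-1}1/\rho_i$.

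To bound this sum below by $(m-1)/\mu_1(B)$, I would use the rotation-invariant identities $\sum_{i=1}^m\phi_i^2=g(r)^2$ and $\sum_{i=1}^m|\nabla\phi_i|^2=g'(r)^2+(m-1)g(r)^2/r^2=:F(r)$, with $r=|x|$. The monotonicities of $g^2$ (non-decreasing) and of $F-\mu_1(B)g^2$ (suitably non-increasing), established in \cite{CB12} via an ODE analysis that accommodates the Gaussian drift $rg'$, combined with the bathtub principle applied to origin-symmetric sets in Gauss space, yield the trace comparison
\[
\sum_{i=1}^m\int_\Omega|\nabla\phi_i|^2\,d\gamma_m\le\mu_1(B)\sum_{i=1}^m\int_\Omega\phi_i^2\,d\gamma_m.
\]
I would then combine this aggregate bound with the individual bounds $\mu_i\le\rho_i$ from the previous step to arrive at \eqref{1.5}. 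A natural formulation introduces the Gram matrices $K_{ij}=\int_\Omega\nabla\phi_i\cdot\nabla\phi_j\,d\gamma_m$ and $M_{ij}=\int_\Omega\phi_i\phi_j\,d\gamma_m$; the generalized eigenvalues $\nu_1\le\dots\le\nu_m$ of the pair $(K,M)$ dominate the Neumann eigenvalues via Rayleigh-Ritz, i.e.\ $\mu_k(\Omega)\le\nu_k$, so \eqref{1.5} reduces to showing $\sum_{k=1}^{m-1}1/\nu_k\ge(m-1)/\mu_1(B)$, which I would deduce by coordinating the rotation $Q$ of the previous step with the spectral decomposition of $M^{-1}K$ so that the dropped direction aligns with its largest generalized eigenvalue.

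The crux of the proof is this last algebraic step. The trace bound $\tr(K)\le\mu_1(B)\tr(M)$ together with the individual Rayleigh bounds is, by itself, \emph{not} sufficient for the harmonic-mean conclusion: already for $m=2$ one can easily construct positive numbers $P_1,P_2,N_1,N_2$ with $N_1+N_2=\mu_1(B)(P_1+P_2)$ but $P_1/N_1<1/\mu_1(B)$. The spectral coordination between $Q$ and $M^{-1}K$ described above is what closes this gap. Once the algebraic step is in place, the equality case $\Omega=B$ follows from the rigidity of the bathtub rearrangement: strict monotonicity of $g(r)^2$ on $(0,R_0)$ forces $\Omega\triangle B$ to have zero Gaussian measure, and the volume constraint then yields $\Omega=B$.
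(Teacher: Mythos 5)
Your setup (the trial functions $v_i=G(|x|)x_i/|x|$ with $G$ the radial eigenfunction extended by a constant, the zero mean from origin-symmetry, the QR rotation, and the Rayleigh bounds $\mu_i(\Omega)\le\rho_i$) matches the paper. The gap is exactly at the step you yourself flag as the crux. Your proposed repair --- pass to the Gram matrices $K,M$, use Rayleigh--Ritz $\mu_k(\Omega)\le\nu_k$, and ``coordinate $Q$ with the spectral decomposition of $M^{-1}K$'' --- cannot be completed. First, the two requirements on the frame conflict: $Q$ must be orthogonal and is already pinned down by the conditions $\int_\Omega \phi_i u_j\,d\gamma_m=0$ for $j<i$, while the generalized eigenbasis of $(K,M)$ is $M$-orthogonal, not Euclidean-orthogonal, and there is no reason the two coincide. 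Second, and more fundamentally, the information you retain is provably insufficient: writing $M=V^TV$, $K=V^TDV$ with $D=\mathrm{diag}(\nu_k)$, the trace bound $\tr K\le\mu_1(B)\tr M$ says only that a weighted average $\sum_k\nu_k\|v_k\|^2\le\mu_1(B)\sum_k\|v_k\|^2$ with unknown positive weights; taking $m=3$, weights $(1,\delta,\delta)$ and $\nu=(\mu_1(B)-2\delta L,\,L,\,L)$ with $\delta=L^{-2}$ and $L$ large satisfies this yet gives $1/\nu_1+1/\nu_2<2/\mu_1(B)$. So no choice of frame rescues the argument from the trace bound plus $\mu_k\le\nu_k$ alone; you must use more of the pointwise structure.

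The paper's actual mechanism is different and is what you are missing. It never forms the aggregate bound $\tr K\le\mu_1(B)\tr M$. Instead it decomposes $|\nabla v_i|^2=G'(r)^2\frac{x_i^2}{|x|^2}+G(r)^2\bigl(\frac{1}{|x|^2}-\frac{x_i^2}{|x|^4}\bigr)$ and treats the two pieces with the weights $1/\mu_i(\Omega)$ attached throughout. The tangential piece for $i=m$ equals $\sum_{i=1}^{m-1}G^2x_i^2/|x|^4$, so using $1/\mu_m\le 1/\mu_i$ it is absorbed into the first $m-1$ terms, collapsing the tangential sum to $\sum_{i=1}^{m-1}\frac{1}{\mu_i}\int_\Omega G^2/|x|^2$ (inequality \eqref{3.6}); the same ordering trick converts $\sum_{i=1}^m\frac{1}{m\mu_i}$ into $\sum_{i=1}^{m-1}\frac{1}{(m-1)\mu_i}$ for the radial piece. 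Only then is the Gaussian rearrangement applied, and to three \emph{separate} radial profiles: $G'^2$ (supported in $B$, giving \eqref{3.5}), $G^2/r^2$ (nonincreasing, which requires Lemma \ref{Lemma 2.3}, i.e.\ $g'\le g/r$, whose proof in turn needs the Gaussian Poincar\'e fact $\mu_1(B_R)>1$ of Lemma \ref{Lemma 2.2}), and $G^2$ (nondecreasing, reversing the inequality). These monotonicity inputs are not the ``$F-\mu_1(B)g^2$ nonincreasing'' statement you cite from \cite{CB12}, and the term-by-term bookkeeping with the ordered weights $1/\mu_i$ is precisely the algebraic content that replaces your unproved spectral-coordination step. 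The equality analysis then comes from equality in \eqref{3.5}, forcing $B\setminus\Omega=\emptyset$ and hence $\Omega=B$ by equality of Gaussian volumes.
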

Note that $\mu_1(\Omega)\le \mu_2(\Omega)\le \cdots\le \mu_{m-1}(\Omega)$, so inequality \eqref{1.5} strengthens the Szeg\"o-Weinberger inequality
$\mu_1(\Omega)\le \mu_1(B)$
proven in \cite[Theorem 4.1]{CB12} in  Gauss space.
The symmetry assumption on $\Omega$ in Theorem \ref{thm 1}
 ensures the validity of  the orthogonality conditions \eqref{3.2}, as  Gaussian  measure is  radially
 symmetric about the origin.
It remains an interesting question that whether Ashbaugh and Benguria’s conjecture  \eqref{1.1} 
holds in Gauss space.

The rest of the paper is organized as follows.
In Section \ref{sect2}, we study properties of the first nonzero eigenvalue and eigenfunctions for balls in Gauss space. In Section  \ref{sect3}, we prove Theorem \ref{thm 1}.

\section{Eigenvalue Problem for Balls In Gauss Space}\label{sect2}
In this section, we  prove some properties of the first nonzero Neumann eigenvalue and its eigenfunctions  for round balls in Gauss space. Let $B_{R}\subset\R^m$ denote the origin-centered  ball   with radius $R$. The Neumann eigenvalue problem on $B_R$  is  
\begin{align}\label{2.1}
\begin{cases}
    -\Delta u+x\cdot \nabla u=\mu(B_R) u \quad & \text{in $B_R$,}\\
\frac{\p u}{\p \nu}=0 \quad & \text{on $\p B_R$,}
\end{cases}
\end{align}
where $\nu$ is the unit outer normal  on $\p B_R$.
From Lemma 4.1 of \cite{CB12}, the first nonzero eigenvalue of \eqref{2.1} has multiplicity $m$, meaning
$$
\mu_1(B_R)=\mu_2(B_R)=\cdots=\mu_m(B_R),
$$
and the corresponding eigenfunctions  are of the form $u_i(x)=g(r)\psi_i(\theta)$, where $(r, \theta)$ are  polar coordinates,
$\psi_i(\theta)$ are the linear coordinate functions restricted to $\mathbb{S}^{m-1}$, and $g(r)$ satisfies
\begin{equation}\label{2.2}
g''(r)+\Big(\frac{m-1}{r}-r\Big) g'(r)+\Big(\mu_1(B_R)-\frac{m-1}{r^2}\Big) g(r)=0, \quad r\in (0,R)
\end{equation}
subject to the boundary conditions $g(0)=0$ and $g'(R)=0$.

Multiplying \eqref{2.2} by $g(r)r^{m-1}e^{-r^2/2}$ and integrating over $[0,R]$, we obtain 
\begin{align}\label{2.3}
\mu_1(B_R)
=\frac{\int_{0}^R\Big(g'(r)^2+\frac{m-1}{r^2}g(r)^2\Big)e^{-\frac{r^2}{2}}r^{m-1}\, dr}{\int_{0}^R g(r)^2e^{-\frac{r^2}{2}}r^{m-1}\, dr}.
\end{align}
Moreover, the first nonzero eigenvalue of \eqref{2.1} has the following the variational characterization 
\begin{align}\label{2.4}
\mu_1(B_R)=\inf_{\vp\in C^1}\Big\{\frac{\int_{0}^R\Big(\vp'(r)^2+\frac{m-1}{r^2}\vp(r)^2\Big)e^{-\frac{r^2}{2}}r^{m-1}\, dr}{\int_{0}^R \vp(r)^2e^{-\frac{r^2}{2}}r^{m-1}\, dr}: \vp(0)=0, \vp'(R)=0\Big\}.
\end{align}
From  \eqref{2.4}, it is clear that $g(r)$ does not change sign in $[0,R]$; thus we assume $g(r)\ge 0$ in what follows.

\begin{lemma}\label{Lemma 2.1}
 $\mu_1(B_R)$ is strictly decreasing in $R$. 
\end{lemma}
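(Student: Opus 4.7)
My plan is to apply the variational characterization \eqref{2.4} with a test function constructed by extending the first eigenfunction on the smaller ball by a constant across the annular region between the two balls.

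Fix $R_2 > R_1 > 0$ and let $g = g_{R_1}$ be the nonnegative radial eigenfunction on $B_{R_1}$ solving \eqref{2.2}. A preliminary observation I will need is that $g(R_1) > 0$: indeed, $g \not\equiv 0$ together with $g'(R_1) = 0$ and uniqueness for the linear second-order ODE \eqref{2.2} at $r = R_1$ forces the conclusion (otherwise $g$ would vanish identically). Next, define $\vp : [0, R_2] \to \R$ by $\vp(r) = g(r)$ on $[0, R_1]$ and $\vp(r) = g(R_1)$ on $[R_1, R_2]$. Because $g'(R_1) = 0$, the extension $\vp$ is of class $C^1$ on $(0, R_2]$ and satisfies $\vp(0) = 0$ and $\vp'(R_2) = 0$, so $\vp$ is admissible in \eqref{2.4}.

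Writing $w(r) = r^{m-1} e^{-r^2/2}$ and using \eqref{2.3} on $[0, R_1]$, inserting $\vp$ into \eqref{2.4} yields
\[
\mu_1(B_{R_2}) \le \frac{\mu_1(B_{R_1}) \int_0^{R_1} g^2 w \, dr + (m-1) g(R_1)^2 \int_{R_1}^{R_2} r^{m-3} e^{-r^2/2} \, dr}{\int_0^{R_1} g^2 w \, dr + g(R_1)^2 \int_{R_1}^{R_2} w \, dr}.
\]
Since $g(R_1)^2 > 0$, the right-hand side is strictly below $\mu_1(B_{R_1})$ precisely when
\[
(m-1) \int_{R_1}^{R_2} r^{m-3} e^{-r^2/2} \, dr < \mu_1(B_{R_1}) \int_{R_1}^{R_2} r^{m-1} e^{-r^2/2} \, dr.
\]

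The closing step is a Hardy-type lower bound on $\mu_1(B_{R_1})$. From \eqref{2.3}, discarding the nonnegative $(g')^2$ contribution and using $1/r^2 > 1/R_1^2$ on $(0, R_1)$,
\[
\mu_1(B_{R_1}) \ge (m-1) \frac{\int_0^{R_1} g^2 w / r^2 \, dr}{\int_0^{R_1} g^2 w \, dr} > \frac{m-1}{R_1^2}.
\]
Combined with the pointwise bound $r^{m-3} \le r^{m-1}/R_1^2$ on $[R_1, R_2]$ (strict for $r > R_1$), this yields the preceding inequality, hence $\mu_1(B_{R_2}) < \mu_1(B_{R_1})$. The one technical point to be careful about is the strict positivity $g(R_1) > 0$; without it the test function argument would collapse into an equality.
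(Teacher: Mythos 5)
Your proof is correct and follows essentially the same route as the paper: extend the eigenfunction of the smaller ball by a constant, use it as a trial function in \eqref{2.4}, and conclude strict decrease via the comparison $\frac{(m-1)\int_{R_1}^{R_2} r^{m-3}e^{-r^2/2}\,dr}{\int_{R_1}^{R_2} r^{m-1}e^{-r^2/2}\,dr} < \frac{m-1}{R_1^2} < \mu_1(B_{R_1})$, which is exactly the paper's key chain of inequalities. Your explicit verification that $g(R_1)>0$ is a small point the paper leaves implicit, but otherwise the arguments coincide.
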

\begin{proof}
The monotonicity of  $\mu_1(B_R)$ with respect to $R$ holds for all regular Sturm-Liouville problems. For completeness, we provide a proof via the variational characterization \eqref{2.4} of $\mu_1(B_R)$.

For $0<a<b$, let $g_{a}(r)$ be an eigenfunction corresponding to $\mu_1(B_{a})$, and extend $g_{a}$ to  $[0, b]$ by  
$$
g_b(r)=\begin{cases}
    g_a(r), \quad& r\le a,\\
    g_a(a), \quad& a<r\le b.
\end{cases}
$$
Clearly, $g_b(r)$ is admissible for $\mu_1(B_b)$, so
\begin{align}\label{2.5}
\mu_1(B_b)\le&  \frac{\int_{0}^b\Big(g_b'(r)^2+\frac{m-1}{r^2}g_b(r)^2\Big)e^{-\frac{r^2}{2}}r^{m-1}\, dr}{\int_{0}^b g_b(r)^2e^{-\frac{r^2}{2}}r^{m-1}\, dr}\\
  =&\frac{\int_{0}^a\Big(g_a'(r)^2+\frac{m-1}{r^2}g_a(r)^2\Big)e^{-\frac{r^2}{2}}r^{m-1}\, dr+\int_{a}^b\frac{m-1}{r^2}g_a(a)^2e^{-\frac{r^2}{2}}r^{m-1}\, dr}{\int_{0}^a g_a(r)^2e^{-\frac{r^2}{2}}r^{m-1}\, dr+\int_{a}^b g_a(a)^2e^{-\frac{r^2}{2}}r^{m-1}\, dr}.\nonumber
\end{align}
Note that
$$
\frac{\int_{a}^b\frac{m-1}{r^2}e^{-\frac{r^2}{2}}r^{m-1}\, dr}{\int_{a}^be^{-\frac{r^2}{2}}r^{m-1}\, dr}<\frac{m-1}{a^2}<\frac{\int_{0}^a\frac{m-1}{r^2}g_a(r)^2e^{-\frac{r^2}{2}}r^{m-1}\, dr}{\int_{0}^ag_a(r)^2e^{-\frac{r^2}{2}}r^{m-1}\, dr},
$$
we obtain
\begin{align*}
& \frac{\int_{a}^b\frac{m-1}{r^2}g_a(a)^2e^{-\frac{r^2}{2}}r^{m-1}\, dr+\int_{0}^a\frac{m-1}{r^2}g_a(r)^2e^{-\frac{r^2}{2}}r^{m-1}\, dr}{\int_{a}^bg_a(a)^2e^{-\frac{r^2}{2}}r^{m-1}\, dr+\int_{0}^ag_a(r)^2e^{-\frac{r^2}{2}}r^{m-1}\, dr} \\  
<\quad&\frac{\int_{0}^a\frac{m-1}{r^2}g_a(r)^2e^{-\frac{r^2}{2}}r^{m-1}\, dr}{\int_{0}^ag_a(r)^2e^{-\frac{r^2}{2}}r^{m-1}\, dr}.
\end{align*}
Substituting above inequality into \eqref{2.5} yields
\begin{align*}
     \mu_1(B_b)< \frac{\int_{0}^a\Big(g_a'(r)^2+\frac{m-1}{r^2}g_a(r)^2\Big)e^{-\frac{r^2}{2}}r^{m-1}\, dr}{\int_{0}^a g_a(r)^2e^{-\frac{r^2}{2}}r^{m-1}\, dr}=\mu_1(B_a),
\end{align*}
as required.
\end{proof}

\begin{lemma}\label{Lemma 2.2}
For any $R>0$, we have
\begin{align}\label{2.6}
    \mu_1(B_R)>1.
\end{align}
\end{lemma}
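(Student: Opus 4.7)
The plan is to substitute $\varphi(r) = r\,w(r)$ into the Rayleigh quotient appearing in \eqref{2.3}--\eqref{2.4} and show that the resulting expression equals $1$ plus a manifestly positive correction. The motivation is that on the whole space $\R^m$, the linear coordinates $x_i = r \cdot (x_i/r)$ are eigenfunctions of $-\Delta + x\cdot\nabla$ with eigenvalue $1$, corresponding to $w$ being constant; the Neumann ball $B_R$ forces a modification of this trivial $w$, and the plan is to extract exactly that modification as a non-negative boundary-plus-gradient term.

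First I will take the first eigenfunction radial profile $g \geq 0$ from \eqref{2.3}, set $w(r):=g(r)/r$ (which extends smoothly to $[0,R]$ because $g$ is $C^1$ with $g(0)=0$), and expand
\begin{align*}
\varphi'(r)^2 + \frac{m-1}{r^2}\varphi(r)^2 = m\, w(r)^2 + 2r\, w(r) w'(r) + r^2 w'(r)^2.
\end{align*}
Integrating the cross term $2rww'\cdot r^{m-1}e^{-r^2/2} = (w^2)'\, r^m e^{-r^2/2}$ by parts against $r^m e^{-r^2/2}$, whose derivative equals $(m r^{m-1} - r^{m+1})e^{-r^2/2}$, the two $m\int_0^R w^2 r^{m-1}e^{-r^2/2}\,dr$ pieces cancel while the boundary contribution at $r=0$ vanishes thanks to the factor $r^m$. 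Since the denominator in \eqref{2.3} becomes $\int_0^R w^2 r^{m+1}e^{-r^2/2}\,dr$ after the substitution, what remains is the exact identity
\begin{align*}
\mu_1(B_R) = 1 + \frac{w(R)^2 R^m e^{-R^2/2} + \int_0^R w'(r)^2\, r^{m+1}\, e^{-r^2/2}\, dr}{\int_0^R w(r)^2\, r^{m+1}\, e^{-r^2/2}\, dr}.
\end{align*}

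It then suffices to show that the added fraction is strictly positive. Vanishing of the numerator would force $w' \equiv 0$ on $[0,R]$ and $w(R) = 0$, so $w \equiv 0$ and $g \equiv 0$, contradicting the non-triviality of the eigenfunction. There is no serious obstacle in this argument beyond bookkeeping; the only items to verify are that the substitution $\varphi = rw$ is compatible with the admissibility condition $\varphi(0)=0$ (automatic), that $w$ has enough regularity to carry out the integration by parts, and that the boundary term at $r=0$ vanishes due to the factor $r^m$. An alternative route would combine the strict monotonicity from Lemma \ref{Lemma 2.1} with a separate computation of $\lim_{R\to\infty}\mu_1(B_R) = 1$, but the test-function identity above avoids any limiting argument.
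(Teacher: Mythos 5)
Your identity is correct and the argument goes through: with $g=rw$ one has $g'^2+\tfrac{m-1}{r^2}g^2=mw^2+2rww'+r^2w'^2$, and integrating $(w^2)'r^m e^{-r^2/2}$ by parts cancels the $m\int_0^R w^2 r^{m-1}e^{-r^2/2}\,dr$ term exactly and converts the remainder into the denominator $\int_0^R w^2 r^{m+1}e^{-r^2/2}\,dr$ plus the nonnegative boundary and gradient contributions, yielding
\begin{align*}
\mu_1(B_R)-1=\frac{w(R)^2R^m e^{-R^2/2}+\int_0^R w'(r)^2 r^{m+1}e^{-r^2/2}\,dr}{\int_0^R w(r)^2 r^{m+1}e^{-r^2/2}\,dr}>0,
\end{align*}
since vanishing of the numerator would force $g\equiv 0$. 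The regularity points you flag are genuinely the only ones that need checking, and they are unproblematic: $w=g/r$ extends continuously to $r=0$ because $g(0)=0$ and $g\in C^1$, and the boundary term at $0$ dies under the factor $r^m$. This is, however, a different route from the paper, which disposes of the lemma in one line by invoking the equality case of the Poincar\'e--Wirtinger inequality in Gauss space (\cite[Theorem 1.1]{BCHT13}): that result gives $\mu_1(\Omega)\ge 1$ for all convex domains, with equality only for a strip, so in particular $\mu_1(B_R)>1$. The citation buys generality (the bound holds for every convex domain, as the paper's Remark notes) at the cost of relying on an external theorem; your computation is self-contained and elementary, exploits the explicit radial structure available only for the ball, and as a bonus produces a quantitative expression for the gap $\mu_1(B_R)-1$. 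Either proof suffices for the role the lemma plays in Lemma \ref{Lemma 2.3}, where only the strict inequality $\mu_1(B_R)>1$ is used.
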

\begin{proof}
This follows from the equality case of  Poincar\'e-Wirtinger inequality, proved in \cite[Theorem 1.1]{BCHT13}(see also \cite{BJ21,BCKT16}). 
\end{proof}
\begin{remark}
In fact, the conclusion of  Lemma \ref{Lemma 2.2}  holds true for all convex bounded domains (not just for the ball), see  \cite{BCHT13, BCKT16} and the references therein. Moreover if $\Omega$
 is a convex
domain in  $\R^n$ then $\mu_1(\Omega)=1$ if and only if $\Omega$
 is a strip, see  \cite{BJ21} and the references therein.
\end{remark}
\begin{lemma}\label{Lemma 2.3}
Let $g(r)$, $r\in[0, R]$, be a nonnegative eigenfunction corresponding to $\mu_1(B_R)$ defined by \eqref{2.2}. Then 
\begin{align}\label{2.7}
    g'(r)>0, \quad\quad r\in(0,R),
\end{align}
and 
\begin{align}\label{2.8}
   g'(r)-\frac{ g(r)}{r}\le 0, \quad\quad r\in(0,R].
\end{align}
\end{lemma}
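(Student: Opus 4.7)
My plan is to derive both inequalities by writing \eqref{2.2} in Sturm--Liouville divergence form and integrating. As a preliminary observation, since $g$ is a first eigenfunction with $g\ge 0$, and the indicial analysis at the regular singular point $r=0$ gives $g(r)\sim c r$ with $c=g'(0)>0$, uniqueness of ODE solutions rules out any interior zero of $g$, so that $g>0$ on $(0,R]$.

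For \eqref{2.7}, I would recast \eqref{2.2} as
\begin{align*}
\left(r^{m-1}e^{-r^2/2}g'(r)\right)'=\left(\frac{m-1}{r^2}-\mu_1(B_R)\right)r^{m-1}e^{-r^2/2}g(r),
\end{align*}
so that, since the boundary contribution at $0$ vanishes,
\begin{align*}
r^{m-1}e^{-r^2/2}g'(r)=\int_0^r\left(\frac{m-1}{s^2}-\mu_1(B_R)\right)s^{m-1}e^{-s^2/2}g(s)\,ds.
\end{align*}
Setting $r_0:=\sqrt{(m-1)/\mu_1(B_R)}$, the unique sign-change point of $(m-1)/s^2-\mu_1(B_R)$, the integrand is strictly positive on $(0,r_0]$, giving $g'(r)>0$ there. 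For $r\in(r_0,R)$ I would use the boundary condition $g'(R)=0$ to rewrite the integral as $-\int_r^R(\cdots)\,ds$, where now the integrand is strictly negative, again yielding $g'(r)>0$. (That $r_0<R$ is automatic, since otherwise a strictly nonnegative integrand on $(0,R)$ would contradict $g'(R)=0$.)

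For \eqref{2.8}, the natural substitution is $h(r):=g(r)/r$: indeed $g'(r)-g(r)/r=r\,h'(r)$, so the claim reduces to $h'(r)\le 0$. A direct computation shows that $h$ satisfies
\begin{align*}
h''+\left(\frac{m+1}{r}-r\right)h'+\left(\mu_1(B_R)-1\right)h=0,
\end{align*}
whose divergence form is $(r^{m+1}e^{-r^2/2}h'(r))'=-(\mu_1(B_R)-1)\,r^{m+1}e^{-r^2/2}h(r)$. Integrating from $0$ to $r$ (valid since $h$ is smooth at the origin) gives
\begin{align*}
r^{m+1}e^{-r^2/2}h'(r)=-(\mu_1(B_R)-1)\int_0^r s^{m+1}e^{-s^2/2}h(s)\,ds.
\end{align*}
By Lemma \ref{Lemma 2.2} one has $\mu_1(B_R)>1$, and $h>0$ by the preliminary observation, so the right-hand side is strictly negative; hence $h'(r)<0$ on $(0,R]$, which is \eqref{2.8}. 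The key non-routine step in this plan is recognizing the substitution $h=g/r$: it recasts the problem as a Sturm--Liouville equation of the same form but with \emph{shifted} dimension $m+2$ and \emph{shifted} eigenvalue $\mu_1(B_R)-1$, and this is exactly what permits Lemma \ref{Lemma 2.2} to be exploited.
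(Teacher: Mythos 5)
Your proof is correct, and it takes a genuinely different route from the paper's for both inequalities. For \eqref{2.7}, the paper argues by contradiction: if $g'(r)=0$ at an interior point, then $g$ restricted to $(0,r)$ is admissible for $\mu_1(B_r)$ with Rayleigh quotient $\mu_1(B_R)$, contradicting the strict monotonicity of $R\mapsto\mu_1(B_R)$ (Lemma \ref{Lemma 2.1}); your integral representation of $r^{m-1}e^{-r^2/2}g'(r)$ reads the sign off directly and dispenses with Lemma \ref{Lemma 2.1} entirely. For \eqref{2.8}, the paper works with $H(r)=g'(r)-g(r)/r$ and runs a sign-barrier argument: it computes $H'=(r-\tfrac{m}{r})g'+(\tfrac{m}{r^2}-\mu_1)g$ and shows that $H(r_1)\ge 0$ forces $H'(r_1)\le(1-\mu_1)g(r_1)<0$, so $H$ cannot cross zero upward; your substitution $h=g/r$ (note $H=rh'$) converts \eqref{2.2} into the same Sturm--Liouville equation with $m\mapsto m+2$ and eigenvalue $\mu_1-1$, and a single integration of its divergence form gives $h'<0$ outright. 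Both routes hinge on exactly the same input, $\mu_1(B_R)>1$ from Lemma \ref{Lemma 2.2}, but yours yields the strict inequality $g'(r)<g(r)/r$ on $(0,R]$ and avoids the paper's two-stage case analysis around $r=\sqrt{m}$. The only points worth tightening are routine: justify $g>0$ on $(0,R]$ (if $g(r_*)=0$ at an interior point then $g'(r_*)=0$ and ODE uniqueness forces $g\equiv 0$, which also settles $g'(0)>0$ via the Frobenius root $\rho=1$ at the origin), and note that the vanishing of the boundary terms at $r=0$ in both integrations follows from the smoothness of the eigenfunction $g(|x|)x_i/|x|$ at the origin, as you indicate.
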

\begin{proof} 
Assume by contradiction that there exists  $r\in(0, R)$ such that $g'(r)=0$. Recall from  \eqref{2.2} that
\begin{align}\label{2.9}
    g''(t)+\Big(\frac{m-1}{t}-t\Big) g'(t)+\Big(\mu_1(B_R)-\frac{m-1}{t^2}\Big) g(t)=0,\quad \quad t\in (0,r) 
\end{align}
subject to $g(0)=0$ and $g'(r)=0$. Taking $g(t)$ ($t\in(0, r)$) as a trial function of $\mu_1(B_r)$ and using  \eqref{2.9}, we find:
\begin{align*}
    \mu_1(B_r)\le \frac{\int_{0}^r\Big(g'(t)^2+\frac{m-1}{t^2}g(t)^2\Big)e^{-t^2/2}t^{m-1}\, dt}{\int_{0}^r g(t)^2e^{-t^2/2}t^{m-1}\, dt}=\mu_1(B_R)
\end{align*}
contradicting with Lemma \ref{Lemma 2.1}. Thus $g'(r)\neq 0$ in $(0, R)$, hence \eqref{2.7} holds. 

Now we prove \eqref{2.8}. Set
$$H(r)=g'(r)-\frac{ g(r)}{r}, \qquad\qquad r\in(0, R]. $$ 
Since $g(0)=0$, $g'(R)=0$, and $g'(r)>0$ for $r\in(0, R)$, we have
\begin{align}\label{2.10}
   \lim_{r\to 0^+} H(r)=0, \qquad H(R)=-\frac{g(R)}{R}<0. 
\end{align}

Taking the derivative with respect to $r$, we obtain  
\begin{align}\label{2.11}
    H'(r)=g''(r)+\frac{g(r)}{r^2}-\frac{g'(r)}{r},
\end{align}
Eliminating  $g''(r)$ from \eqref{2.2} and \eqref{2.11},  we have
\begin{align}\label{2.12}
    H'(r)=\Big(r-\frac{m}{r}\Big)g'+\Big(\frac{m}{r^2}-\mu_1(B_R)\Big)g(r).
\end{align}

We claim that 
\begin{align}\label{2.13}
 H(r)\le 0 \quad\text{for}\quad r\in (0, \sqrt{m}), \text{\quad and \quad } H(a)<0 \quad \text{for some} \quad a\in(0, \sqrt{m}).   
\end{align}
 Indeed, if there exists an $r_1\in (0, \sqrt{m})$ such that $H(r_1)\ge 0$, i.e. 
 \begin{align*}
 g'(r_1)\ge \frac{g(r_1)}{r_1},
 \end{align*}
 then plugging above inequality into \eqref{2.12} yields
\begin{align}\label{2.14}
    H'(r_1)=\Big(r_1-\frac{m}{r_1}\Big)g'(r_1)+\Big(\frac{m}{r_1^2}-\mu_1(B_R)\Big)g(r_1)
    \le (1-\mu_1(B_R))g(r_1)
    <0,
\end{align}
where we used  $\mu_1(B_R)>1$ (see Lemma \ref{Lemma 2.2}) in the inequality. Thus we deduce from \eqref{2.10} and   \eqref{2.14} that $H(r)\le 0$ for $r\in(0,\sqrt{m})$. If $H(r)\equiv 0$ for all $r\in (0, \sqrt{m})$, then $g(r)=g'(0) r$ in $(0, \sqrt{m})$, contradicting with \eqref{2.2}. Thus Claim \eqref{2.13} comes true.

Now we prove $H(r)\le 0$ for all $r\in(a, R)$. Assume  $H(r_2)=0$ for some $r_2\in(0,R)$, namely 
\begin{align*}
g'(r_2)=\frac{g(r_2)}{r_2},
\end{align*}
combined with \eqref{2.12}, if follows
\begin{align}\label{2.15}
    H'(r_2)=\Big(r_2-\frac{m}{r_2}\Big)g'+\Big(\frac{m}{r_2^2}-\mu_1(B_R)\Big)g(r_2)
    =(1-\mu_1(B_R))g(r_2)
    <0,
\end{align}
Therefore we conclude from \eqref{2.10}, \eqref{2.13} and \eqref{2.15} that 
 $H(r)\le 0$ for all $r\in [a, R]$.
Hence we complete the proof of \eqref{2.8}.
\end{proof}	

\section{Proof of Main Theorem}\label{sect3}
In this section, we prove  Theorem \ref{thm 1}. The main idea is to  construct trial functions for $\mu_k$ (see \eqref{1.4}) using techniques introduced by Weinberger \cite{Wei56} and further developed by Xia and Wang in \cite{XW23}. We first recall a monotonicity lemma for Gaussian symmetrization.
\begin{lemma}\label{Lemma 3.1}
    Let $\Omega\subset \R^m$ be an open set, and let $B$ be the origin-centered round ball with the same Gaussian volume as $\Omega$, i.e. $\int_B \, d\gamma_m=\int_\Omega \, d\gamma_m$. If $h(r)$ is nonincreasing  on  $[0,+\infty)$, then 
    \begin{align*}
        \int_\Omega h(|x|)\, d\gamma_m\le    \int_B h(|x|)\, d\gamma_m.
    \end{align*}
Here, $d\gamma_m=(2\pi)^{-m/2}e^{-|x|^2/2}dx$ is the $m$-dimensional Gaussian measure.
\end{lemma}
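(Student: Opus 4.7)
The plan is a standard Gaussian rearrangement argument that exploits the fact that the superlevel sets of the radially nonincreasing function $x\mapsto h(|x|)$ are already origin-centered balls, just like $B$. My strategy is to reduce the inequality to a pointwise comparison on the symmetric difference $\Omega\triangle B$. Writing $B=B_R$, I would first subtract the common contribution on $\Omega\cap B$ from both sides, so that the inequality becomes
\begin{align*}
\int_{B\setminus\Omega} h(|x|)\,d\gamma_m \ge \int_{\Omega\setminus B} h(|x|)\,d\gamma_m.
\end{align*}
The equal-volume hypothesis $\int_B d\gamma_m=\int_\Omega d\gamma_m$ translates immediately into the set identity $\int_{B\setminus\Omega} d\gamma_m = \int_{\Omega\setminus B} d\gamma_m$.

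The next step is to feed in the monotonicity of $h$. On $\Omega\setminus B_R$ one has $|x|\ge R$ and therefore $h(|x|)\le h(R)$, whereas on $B_R\setminus\Omega$ one has $|x|\le R$ and therefore $h(|x|)\ge h(R)$. Multiplying these pointwise bounds against $d\gamma_m$ and invoking the measure identity above gives
\begin{align*}
\int_{B\setminus\Omega} h(|x|)\,d\gamma_m \ge h(R)\int_{B\setminus\Omega} d\gamma_m = h(R)\int_{\Omega\setminus B} d\gamma_m \ge \int_{\Omega\setminus B} h(|x|)\,d\gamma_m,
\end{align*}
which is precisely the desired inequality.

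I do not anticipate any serious obstacle: the argument reduces to a two-line symmetric-difference computation. The only mild technical point is integrability of $h(|x|)$ near the origin when $h$ is unbounded there, which I would handle by truncating $h$ at a large level $N$, applying the bounded case, and then passing to the limit via monotone convergence. The essential geometric content is simply that $B$ itself is a superlevel set of $|x|$, so the monotonicity of $h$ aligns perfectly with the decomposition into $B\setminus\Omega$ and $\Omega\setminus B$; an equivalent route would be the layer-cake formula $\int h(|x|)\,d\gamma_m=\int_0^\infty \gamma_m(\{h(|x|)>t\}\cap\,\cdot\,)\,dt$ combined with the elementary inequality $\gamma_m(\Omega\cap B_{r(t)})\le\min\{\gamma_m(\Omega),\gamma_m(B_{r(t)})\}=\gamma_m(B_R\cap B_{r(t)})$, but the symmetric-difference proof is shorter.
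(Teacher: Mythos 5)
Your proof is correct and complete: the reduction to the symmetric difference, the measure identity $\gamma_m(B\setminus\Omega)=\gamma_m(\Omega\setminus B)$, and the pointwise comparison with $h(R)$ on each piece give exactly the claimed inequality, and your truncation remark handles any integrability issue. The paper itself offers no proof here --- it simply cites the proof of inequality (4.36) in [CB12] --- and the argument there is the same standard rearrangement comparison, so your write-up in effect supplies the elementary proof the paper omits.
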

\begin{proof}
    See the proof of  inequality (4.36) in \cite[Page 213]{CB12}.
\end{proof}
\begin{proof}[Proof of Theorem \ref{thm 1}]
  Let $u_i(x)$ be an eigenfunction corresponding to $\mu_i(\Omega)$. The Neumann eigenvalues satisfy the variational principle
    \begin{align}\label{3.1}
        \mu_i(\Omega)=\inf\Big\{\frac{\int_\Omega |\nabla u|^2\, d\gamma_m}{\int_\Omega u^2\, d\gamma_m}: 
        \int_\Omega u u_k \, d\gamma_m=0 \text{ for } 0\le k<i\Big\}.
    \end{align}
Let $B\subset\R^m$ be the origin-centered ball with radius  $R$ and the same Gaussian volume as $\Omega$.
Define  $G(r):[0, \infty) \to [0, \infty)$ as 
        \begin{align*}
        G(r)=\begin{cases}
            g(r),&\quad r<R,\\
            g(R),&\quad r\geq R,
        \end{cases}
    \end{align*}
    where $g(r)$ is defined in Section \ref{sect2}. For $1\le i\le m$,  define
    \begin{align*}
        v_i(x)=G(|x|)\frac{x_i}{|x|}.
    \end{align*}
Since  $\Omega$ is symmetric about the origin, we have
     \begin{align*}
        \int_\Omega v_i(x)   d\gamma_m=0, \quad i=1,2,\cdots, m.
    \end{align*}
For $1\le i,  j\le m$, let
    \begin{align*}
    q_{ij}= \int_{\Omega} v_i(x) u_j(x) \, d\gamma_m,\end{align*}.
By  QR-factorization, there exists  an orthogonal matrix $A=(a_{ij})$ such that 
    \begin{align*}
        0=\int_{\Omega} \sum_{k=1}^m a_{ik} v_k(x)  u_j(x)\, d\gamma_m
    \end{align*}
    for all $1\le j< i\le m$.
Thus, by appropriately selecting the coordinate axes, we may assume that
    \begin{align}\label{3.2}
        \int_{\Omega} v_i(x)u_j(x)\, d\gamma_m=0
    \end{align}
    for $1\le j<i\le m$. 
    
According to \eqref{3.2} and variational formulation \eqref{3.1} of $\mu_i(\Omega)$, $v_i(x)$ is admissible for  $\mu_i(\Omega)$, i.e. for $1\le i\le m$,
    \begin{align}\label{3.3}
        \int_{\Omega} v_i(x)^2 \, d\gamma_m \leq \frac{1}{\mu_i(\Omega) } \int_{\Omega} |\nabla v_i(x) |^2\, d\gamma_m
    \end{align}
A direct computation shows
\begin{equation*} 
        |\nabla v_i(x)|^2 = G'(r)^2 \frac{x_i^2}{|x|^2} + G(r)^2 (\frac 1 {|x|^2}-\frac{x_i^2}{|x|^4}).
\end{equation*}
Substituting this into  \eqref{3.3} and summing over $i$, we obtain
    \begin{align}\label{3.4}
\int_{\Omega} G(r)^2e^{-\frac{|x|^2}2}\, dx
\leq&  \sum_{i=1}^m \frac{1}{\mu_i(\Omega)} \int_{\Omega}  G'(r)^2 \frac{x_i^2}{|x|^2} e^{-\frac{|x|^2}2}\, dx  \\
& +\sum_{i=1}^m \frac{1}{\mu_i(\Omega)} \int_{\Omega} G(r)^2 (\frac 1 {|x|^2}-\frac{x_i^2}{|x|^4})e^{-\frac{|x|^2}2}\, dx.\nonumber
    \end{align}
Note that  $G(r)$ is a constant for $r>R$, so 
    \begin{align} \label{3.5}
\int_{\Omega}  G'(r)^2 \frac{x_i^2}{|x|^2}  e^{-|x|^2/2}\, dx 
= &\int_{\Omega \cap B} G'(r)^2 \frac{x_i^2}{|x|^2}   e^{-|x|^2/2}\, dx\\
       \leq  & \int_{B} G'(r)^2 \frac{x_i^2}{|x|^2} e^{-|x|^2/2}\, dx \nonumber\\
        = & \frac{1}{m} \int_{B} G'(r)^2 e^{-|x|^2/2}\, dx.\nonumber
    \end{align}
Using the ordering $\mu_1(\Omega)\le \mu_2(\Omega)\le \cdots\le \mu_m(\Omega)$ and the identity $\sum_{i=1}^m \frac{x_i^2}{|x|^2}=1$, we estimate  
    \begin{align}\label{3.6}
   &\sum_{i=1}^m \frac{1}{\mu_i(\Omega)} \int_{\Omega} G(r)^2 (\frac 1 {|x|^2}-\frac{x_i^2}{|x|^4}) e^{-|x|^2/2}\, dx\\
 =&   \sum_{i=1}^{m-1} \frac{1}{\mu_i(\Omega)} \int_{\Omega} G(r)^2 (\frac 1 {|x|^2}-\frac{x_i^2}{|x|^4}) e^{-|x|^2/2}\, dx+\frac{1}{\mu_m(\Omega)} \sum_{i=1}^{m-1} \int_{\Omega} G(r)^2 \frac{x_i^2}{|x|^4} e^{-|x|^2/2}\, dx\nonumber\\
\le &\sum_{i=1}^{m-1} \frac{1}{\mu_i(\Omega)} \int_{\Omega} G(r)^2 \frac 1 {|x|^2} e^{-|x|^2/2}\, dx.\nonumber     \end{align}
Since $G(r)/r$ is nonincreasing in $(0, \infty)$ by Lemma \ref{Lemma 2.3}, it follows from  Lemma \ref{Lemma 3.1} that
\begin{align}\label{3.7}
    \int_{\Omega} \frac{G(r)^2}{|x|^2}  e^{-|x|^2/2}\, dx\le \int_{B} \frac{G(r)^2}{|x|^2}  e^{-|x|^2/2}\, dx.
\end{align}
Combining  \eqref{3.6} and  \eqref{3.7}, we have
\begin{align}\label{3.8}
   \sum_{i=1}^m \frac{1}{\mu_i(\Omega)} \int_{\Omega} G(r)^2 (\frac 1 {|x|^2}-\frac{x_i^2}{|x|^4}) e^{-|x|^2/2}\, dx
\le &\sum_{i=1}^{m-1} \frac{1}{\mu_i(\Omega)}\int_{B} \frac{G(r)^2}{|x|^2}  e^{-|x|^2/2}\, dx.  
    \end{align}
Substituting \eqref{3.5} and  \eqref{3.8} into \eqref{3.4} yields
\begin{align}\label{3.9}
&\int_\Omega G(r)^2 e^{-\frac {|x|^2} 2}\, dx\\
\le& \sum_{i=1}^m \frac 1{m\mu_i(\Omega)}\int_{B} G'(r)^2 e^{-|x|^2/2}\, dx+\sum_{i=1}^{m-1} \frac{1}{\mu_i(\Omega)}\int_{B} \frac{G(r)^2}{|x|^2}  e^{-|x|^2/2}\, dx\nonumber\\
\le& \sum_{i=1}^{m-1} \frac 1{(m-1)\mu_i(\Omega)}\int_{B} G'(r)^2 e^{-|x|^2/2}\, dx+\sum_{i=1}^{m-1} \frac{1}{\mu_i(\Omega)}\int_{B} \frac{G(r)^2}{|x|^2}  e^{-|x|^2/2}\, dx\nonumber\\
=& \sum_{i=1}^{m-1} \frac 1{(m-1)\mu_i(\Omega)}\int_{B} \big(G'(r)^2+(m-1)\frac{G(r)^2}{r^2}\big)e^{-|x|^2/2}\, dx,\nonumber
\end{align}
where the second inequality uses $\mu_i(\Omega)\le \mu_m(\Omega)$ for $1\le i\le m-1$.

Similarly, since $G(r)$ is nondecreasing in $(0, \infty)$, Lemma \ref{Lemma 3.1} implies
\begin{align}\label{3.10}
    \int_{\Omega} G(r)^2  e^{-|x|^2/2}\, dx\ge \int_{B} G(r)^2  e^{-|x|^2/2}\, dx.
\end{align}
Combining \eqref{3.9} and \eqref{3.10}, we get
\begin{align*}
 \int_{B} G(r)^2  e^{-|x|^2/2}\, dx\le  \sum_{i=1}^{m-1} \frac{1}{(m-1) \mu_i(\Omega) }\int_{B} \Big(G'(r)^2+\frac{m-1}{r^2} G(r)^2\Big)  e^{-|x|^2/2}\, dx,   
\end{align*}
which gives
    \begin{align*}
         \sum_{i=1}^{m-1} \frac{1}{\mu_{i}(\Omega) }
         \ge&   \frac{	(m-1) \int_{B} G(r)^2e^{-|x|^2/2}\, dx }{\int_{B} \big(G'(r)^2 +(m-1)G(r)^2 /r^2\big)e^{-|x|^2/2}\, dx } \\
         =& \frac{(m-1)\int_{0}^R g(r)^2e^{-\frac{r^2}{2}}r^{m-1}\, dr}{\int_{0}^R\Big(g'(r)^2+\frac{m-1}{r^2}g(r)^2\Big)e^{-\frac{r^2}{2}}r^{m-1}\, dr} \\ 
         =&\frac{m-1}{\mu_1(B)},
    \end{align*}
     where  the last equality follows from \eqref{2.3}. 
     
If equality holds in the above inequality,
    then equality in  \eqref{3.5} implies $B\setminus \Omega=\emptyset$.
Since $\Omega$ and $B$ have the same Gaussian volume, it follows that $\Omega=B$. 
     This completes the proof.
\end{proof}

\section*{Acknowledgments}
The authors would like to thank 
Professor Rafael  Benguria for his interest in this paper, 
and thank Professor Francesco Chiacchio  for his helpful discussion regarding Remark 4.3 of \cite{CB12}, as well as for bringing references \cite{BJ21, BCHT13, BCKT16} to our attention. We also thank Professor Jing Mao for explaining his paper with Chen \cite{CM24}. The first author is supported  by Undergraduate Training Program for  Innovation and Entrepreneurship, Soochow University. The second author is supported by NSF of Jiangsu Province No. BK20231309. We thank the anonymous referee for valuable comments on the manuscript.

 \bibliographystyle{plain}
	\bibliography{ref}

\end{document}